\numberwithin{equation}{section}
\newtheorem{theorem}{Theorem}[section]
\newtheorem{lemma}{Lemma}[section]
\newtheorem{corollary}{Corollary}[section]
\newcommand{\ov}[1]{\overline{#1}}
\newcommand{\ve}{\varepsilon}
\theoremstyle{definition}
\theoremstyle{remark}
\begin{document}
\bibliographystyle{amsplain}

\author[A. Chau]{Albert Chau}
\address{Department of Mathematics, The University of British Columbia, 1984 Mathematics Road, Vancouver, B.C.,  Canada V6T 1Z2.  Email: chau@math.ubc.ca. } 
\author[B. Weinkove]{Ben Weinkove}
\address{Department of Mathematics, Northwestern University, 2033 Sheridan Road, Evanston, IL 60208, USA.  Email: weinkove@math.northwestern.edu.}

\title[{\large $\alpha$}-concavity for the porous medium equation]{Non-preservation of {\large $\alpha$}-concavity for \\ the porous medium equation}

\thanks{Research supported in part by  NSERC grant $\#$327637-06 and NSF grant DMS-2005311. Part of this work was carried out while the second-named author was visiting the Department of Mathematical Sciences at the University of Memphis and he thanks them for their kind hospitality.}

\maketitle

\vspace{-20pt}

\begin{abstract}
We show that the porous medium equation does not in general preserve $\alpha$-concavity of the pressure for $0 \le \alpha <1/2$ or $1/2<\alpha \le 1$.  In particular, this resolves an open problem of V\'azquez on whether concavity of pressure is preserved by the porous medium equation.  Our results 
strengthen an earlier work of Ishige-Salani, who considered the case of small $\alpha>0$.  Since Daskalopoulos-Hamilton-Lee showed that $1/2$-concavity is preserved, our result is sharp.

Our explicit examples show that concavity can be instantaneously broken at an interior point of the support of the initial data.  For $0\le \alpha<1/2$, we give another set of examples to show that concavity can be broken at a boundary point.
\end{abstract}

\section{Introduction}
 
The porous medium equation (PME) is a model for gas diffusing in a porous medium.  The gas density  $u(x,t)\ge 0$ satisfies the equation
\begin{equation}\label{PME}
\begin{split}
\frac{\partial u}{\partial t} = {} &  \Delta (u^m) \\
\end{split}
\end{equation}
for $(x,t)\in \mathbb{R}^n \times (0,\infty)$ where $m>1$ is a given number, with initial data
$$u(x,0)=u_0(x) \ge 0, \quad \textrm{for } x \in \mathbb{R}^n.$$
The equation (\ref{PME}) is degenerate where $u=0$, and so has to be interpreted in a weak sense (described below in Section \ref{subsec}) which requires only that $u_0$ is nonnegative and in $L^1(\mathbb{R}^n)$.  In the case where $u_0$ is H\"older continuous and compactly supported, the PME has a unique continuous solution $u(x,t)$ on $\mathbb{R}^n \times [0,\infty)$ in this weak sense.  In general, for each $t>0$, the function $x \mapsto u(x,t)$ is smooth on the set where ``there is gas'', given by
 $$\Omega_t = \{ x \in \mathbb{R}^n \ | \ u(x,t)>0\}.$$
 The boundary $\partial \Omega_t$ is the ``free boundary'' of this equation.
 
 It is convenient to work with the \emph{pressure} function $v:=(m/(m-1))u^{m-1}$ which satisfies 
 the evolution equation
  \begin{equation}\label{PME2}
\begin{split}
\frac{\partial v}{\partial t} = {} & (m-1)v\Delta v+|\nabla v|^2
\end{split}
\end{equation}
on the set $\Omega_t$ for all $t$ and with initial data $v(x,0)= v_0(x)$ supported on $\Omega=\Omega_0$.  In what follows, we will also refer to the equation for pressure (\ref{PME2}) as the porous medium equation.

It has long been of interest to understand whether concavity/convexity conditions are preserved by parabolic evolution equations, both with and without a free boundary.  We define the \emph{$\alpha$-concavity} of a function $f$  for $\alpha> 0$ to mean that $f^{\alpha}$ is concave, while $0$-concavity asserts that $\log f$ is concave.  In particular, if $f$ is twice differentiable then $\alpha$-concavity for $\alpha\ge 0$  means that the matrix with $(i,j)$th entry
$$f D_i D_j f -(1-\alpha)D_if D_j f$$
is nonpositive. 

The log concavity of positive solutions of the heat equation on a fixed domain is preserved \cite{BL}, whereas convexity of the level sets on ring domains is not \cite{CW1, IS1}, except under special conditions \cite{B, CW2, CH, CMS, IS3}.  For solutions of the  one-phase Stefan problem, which satisfy  the heat equation in the interior and for which the (free) boundary moves in the outward normal direction with speed equal to the norm of the gradient, $\alpha$-concavity is not preserved for $0\le \alpha <1/2$ \cite{CW3}.

It is natural to ask for which values of $\alpha$ is $\alpha$-concavity preserved along the porous medium equation.
Dasakopoulos-Hamilton-Lee \cite{DHL} showed that root concavity of the pressure $v$, corresponding to $\alpha=1/2$, is preserved for the porous medium equation.  More precisely, they considered initial data $v_0$ which is root concave, smooth up to the boundary of the convex set $\Omega$, and satisfies the nondegeneracy condition $v_0+|Dv_0| \ge c>0$ on $\Omega$.   They showed that  the solution $v(x,t)$ is root concave for all $t>0$.  

On the other hand, Ishige-Salani \cite{IS2} showed that there exists some $\alpha$ with $0<\alpha<1/2$ such that $\alpha$-concavity is \emph{not} in general preserved.  However the case of general $\alpha \in [0,1] \setminus \{ \frac{1}{2} \}$, including $\alpha=1$, has remained open until now.
  V\'azquez, in his 2007 monograph \cite[p. 520]{V}, posed the open problem: ``Prove or disprove the preservation of pressure concavity for the solutions of the PME in several space dimensions.'' Note that for $n=1$, B\'enilan and V\'azquez \cite{BV} had already shown that concavity of pressure \emph{is} preserved.



Our first result shows that $\alpha$-concavity is \emph{not preserved} in general by the porous medium equation in dimension $n=2$ for any $\alpha$ in $[0,1] \setminus \{ \frac{1}{2}\}$.  By the result of \cite{DHL}, this  is sharp.

  \begin{theorem} \label{maintheorem0}
Let $B$ be the open unit ball in $\mathbb{R}^2$ centered at the origin.  Given  $\alpha \in [0,1]\setminus \{\frac{1}{2}\}$, there exists $v_0 \in C^{\infty}(\ov{B})$ which is strictly positive on $B$ and vanishes on $\partial B$ with the following properties:
\begin{enumerate}
\item[(i)] $v_0$ is $\alpha$-concave on $B$.
\item[(ii)] $\nabla v_0$ does not vanish at any point of $\partial B$.
\item[(iii)] Let $v(t)$ be the solution of the porous medium equation (\ref{PME2}) starting at $v_0$.   Then there exists $\delta>0$ such that $v(t)$ is not $\alpha$-concave in a neighborhood of the origin for $t \in (0, \delta)$.
\end{enumerate}
\end{theorem}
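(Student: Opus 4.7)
The plan is to construct an explicit $v_0 \in C^{\infty}(\overline{B})$ meeting (i) and (ii), and then to verify (iii) by a first-order-in-$t$ sign computation. Set
\[
H_\alpha := v\,D^2 v - (1-\alpha)\,Dv\otimes Dv,
\]
so that $v^\alpha$ is concave if and only if $H_\alpha\leq 0$. It will suffice to exhibit an interior point $q$ in a small neighbourhood of the origin and a unit vector $e$ with $e^{\top}H_\alpha(q,0)\,e=0$ but $e^{\top}\partial_t H_\alpha(q,0)\,e>0$; by continuity this gives $e^{\top}H_\alpha(q,t)\,e>0$ for all $t\in(0,\delta)$ with $\delta$ small, and (iii) follows.

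To identify the sign it is convenient to pass to $\phi:=v^\alpha$; a chain-rule calculation turns the PME \eqref{PME2} into
\[
\phi_t = (m-1)\,\phi^{1/\alpha}\,\Delta\phi + \frac{(m-1)(1-\alpha)+1}{\alpha}\,\phi^{1/\alpha-1}|\nabla\phi|^2,
\]
and $\alpha$-concavity of $v$ is equivalent to $D^2\phi\leq 0$. At a point $q$ where $D^2\phi(q)\,e=0$, setting $\phi_e := D\phi(q)\cdot e$ and contracting $D^2\phi_t$ twice with $e$ (using the identity $\phi_{ij}(q)e^j=0$) reduces $e^\top D^2\phi_t(q)\,e$ to
\begin{multline*}
(m-1)\left[\tfrac{1-\alpha}{\alpha^2}\phi^{\frac{1}{\alpha}-2}\phi_e^{\,2}\Delta\phi + \tfrac{2}{\alpha}\phi^{\frac{1}{\alpha}-1}\phi_e(\Delta\phi)_e + \phi^{1/\alpha}(\Delta\phi)_{ee}\right] \\
+ \tfrac{((m-1)(1-\alpha)+1)(1-\alpha)(1-2\alpha)}{\alpha^3}\phi^{\frac{1}{\alpha}-3}\phi_e^{\,2}|\nabla\phi|^2.
\end{multline*}
The coefficient of the last term vanishes \emph{exactly} at $\alpha=1/2$---consistently with the root-concavity preservation of Daskalopoulos-Hamilton-Lee---and has a definite nonzero sign for $\alpha\in(0,1)\setminus\{1/2\}$. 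The plan is to build $v_0$ so that, for $\alpha\in[0,1/2)$, this $(1-2\alpha)$-weighted gradient term drives the whole expression strictly positive; for $\alpha\in(1/2,1]$, where that term has unfavourable sign, the role is played by the mixed term $\tfrac{2(m-1)}{\alpha}\phi^{1/\alpha-1}\phi_e(\Delta\phi)_e$, whose sign is under our control through the construction of $v_0$; the edge cases $\alpha=0$ (log-concavity) and $\alpha=1$ are handled by limiting or direct versions of the same analysis.

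The main obstacle is that $q$ must be a \emph{non-critical} degenerate point of $\phi$, i.e.\ $D\phi(q)\neq 0$. Indeed, if $D\phi(q)=0$ then all $\phi_e$-dependent terms above vanish; moreover the global concavity of $\phi$ on a neighbourhood of $q$ forces $\phi_{ee}$ to attain its maximum value $0$ at $q$, which gives $D\phi_{ee}(q)=0$ and $D^2\phi_{ee}(q)\leq 0$, so in particular $(\Delta\phi)_{ee}(q)=\Delta\phi_{ee}(q)\leq 0$; the displayed expression is then $\leq 0$ and $\alpha$-concavity at $q$ is \emph{preserved} rather than broken. Thus we plan to build $v_0$ by perturbing a strictly $\alpha$-concave smooth radial model on $\overline{B}$ meeting the boundary conditions $v_0=0$, $|\nabla v_0|>0$ on $\partial B$ (for example $v_0 = 1-|x|^2$ for $\alpha\in(0,1]$, with analogous models for $\alpha=0$) by a small, carefully tuned nonradial smooth perturbation, so that $H_\alpha(v_0)$ acquires a non-critical zero eigenvector at some interior point $q$ in a neighbourhood of the origin where the sign computed above is strictly positive; then (iii) follows by continuity of $v$ and its derivatives in $t$.
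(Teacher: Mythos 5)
Your proposal takes the same route as the paper: pass to $\phi=v^\alpha$, differentiate the PME for $\phi$ twice, and arrange a nonstrictly concave initial datum whose degenerate second derivative $\phi_{ee}$ is pushed positive at $t=0^+$ at an interior degenerate point. Your identification of the $(1-2\alpha)$-weighted gradient term as the crucial structural feature, and of the mixed term $\frac{2(m-1)}{\alpha}\phi^{1/\alpha-1}\phi_e(\Delta\phi)_e$ as the surrogate for $\alpha>1/2$, matches the paper's cases (large slope $a$ for $0\le\alpha<1/2$ or $\alpha=1$; large parameter $b$ for $1/2<\alpha<1$). Your observation that the degenerate point must be non-critical is exactly the reason the paper's model functions carry the linear term $ax_1$ (resp.\ $\tfrac{\alpha(3/2-\alpha)^{1/2}}{b(1-\alpha)}x_1$).

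Two points need fixing, one a justification gap and one a real omission. First, your claim that the reduction of $e^\top D^2\phi_t(q)e$ to the displayed expression follows ``using the identity $\phi_{ij}(q)e^j=0$'' is not quite right as stated: that identity alone leaves the extra term
$$\frac{2\big((m-1)(1-\alpha)+1\big)}{\alpha}\,\phi^{1/\alpha-1}\,\phi_k\,\phi_{kee}\,,$$
i.e.\ $\nabla\phi\cdot\nabla\phi_{ee}$. This term does in fact vanish, but for the reason you state in your very next paragraph and do not invoke here: concavity of $\phi$ and $\phi_{ee}(q)=0$ force $q$ to be an interior maximum of $\phi_{ee}$, so $\nabla\phi_{ee}(q)=0$. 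You should cite this at the point of reduction; as written the formula looks like the result of a computational slip. (The paper sidesteps this by building $w$ so that $w_{111}(0)=w_{211}(0)=0$ explicitly.)

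Second, and more substantively, the actual construction is left as an outline: ``a small, carefully tuned nonradial smooth perturbation'' of a radial model. The heart of the paper's proof is precisely the explicit polynomial model $w$ near the origin (Lemma 3.1) satisfying $w_{11}(0)=0$, $D^2w<0$ off the origin, and the strict sign of the right-hand side of the evolution, together with the gluing via a cutoff and the radial profile $F$ to produce a genuinely $\alpha$-concave $v_0$ on all of $\overline{B}$ vanishing on $\partial B$ with nonzero normal derivative. Without something playing this role, condition (i) of the theorem --- global $\alpha$-concavity of $v_0$, not just local near $q$ --- is not established, and neither is the boundary nondegeneracy (ii). Your plan is workable, but you need to supply the explicit perturbation and verify that it stays $\alpha$-concave up to $\partial B$ and keeps $|\nabla v_0|>0$ there. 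One small advantage of your formulation: tracking the scalar $h(t)=e^\top H_\alpha(q,t)e$ directly avoids the paper's appeal to smoothness of the largest eigenvalue of $D^2 v^\alpha$ near the origin.
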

 
In particular this resolves the open problem of V\'azquez. 

Theorem \ref{maintheorem0} shows that $\alpha$-concavity can be instantaneously lost at an \emph{interior} point of the domain, for $\alpha \in [0,1] \setminus \{\frac{1}{2} \}$.  It is also natural to ask whether concavity breaking can occur at a \emph{boundary} point.  Namely, can the sets $\Omega_t$ be non-convex for $t>0$ if the initial data is $\alpha$-concave on a convex $\Omega$?   The work of Ishige-Salani \cite{IS2} showed that this phenomenon can occur for some $\alpha$ with $0<\alpha<1/2$.   Our second result extends the Ishige-Salani result to all $\alpha$ with $0\le \alpha<1/2$.
 
 \pagebreak[3]
 \begin{theorem} \label{maintheorem}
Given  $\alpha \in [0,1/2)$, there exist a bounded open convex set $\Omega \subset \mathbb{R}^2$ with smooth boundary and $v_0 \in C^{\infty}(\ov{\Omega})$ which is strictly positive on $\Omega$ and vanishes on $\partial \Omega$ with the following properties:
\begin{enumerate}
\item[(i)] $v_0$ is $\alpha$-concave on $\Omega$.
\item[(ii)] $\nabla v_0$ does not vanish at any point of $\partial \Omega$.
\item[(iii)] Let $v(x,t)$ be the solution of the porous medium equation (\ref{PME2}) starting at $v_0$, and write $\Omega_t = \{ x \in \mathbb{R}^n \ | \ v(x,t)>0\}$.  Then there exists $\delta>0$ such that 
$$\Omega_t \textrm{ is not convex for any $t \in (0,\delta)$}$$
and thus
$$v|_{\Omega_t} \textrm{ is not $\alpha$-concave for any $t\in (0,\delta)$}.$$
\end{enumerate}
\end{theorem}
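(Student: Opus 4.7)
The plan is to find a smooth bounded convex $\Omega \subset \mathbb{R}^2$ whose boundary contains a straight-line segment $\Sigma$, together with an $\alpha$-concave $v_0 \in C^{\infty}(\ov{\Omega})$ vanishing on $\partial \Omega$ with nowhere-vanishing gradient, such that $|\nabla v_0|$ fails to be concave as a function of arclength along $\Sigma$. Since the free boundary of the porous medium equation moves outward with normal velocity $|\nabla v|$ (Darcy's law), and the outward normal is constant along the flat segment $\Sigma$, the segment evolves to first order in $t$ into the graph $\{y = -t\, |\nabla v_0|(x,0)\}$ in local coordinates where $\Sigma \subset \{y = 0\}$ and $\Omega$ lies above. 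Since a region of the form $\{y > -t\, g(x)\}$ is convex if and only if $g$ is concave, failure of concavity of $|\nabla v_0|$ on $\Sigma$ immediately produces a non-convex $\Omega_t$ for every $t \in (0,\delta)$.

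To realize this, take $\Sigma = \{(x,0) : a \le x \le b\}$ with $0 < a < b$ and write $v_0(x,y) = y\, G(x,y)$ in a neighborhood of $\Sigma$ with $G > 0$ smooth, so that $|\nabla v_0|(x,0) = G(x, 0) =: g(x)$. Choose $g(x) = (x-x_*)^{p}$ with $x_* < a$ and $p > 1$, which makes $g$ strictly convex on $\Sigma$. The $\alpha$-concavity of $v_0$ is the condition that the matrix $M := v_0\, D^2 v_0 - (1-\alpha)\, D v_0 \otimes D v_0$ is negative semidefinite, and a direct expansion in $y$ near $\Sigma$ gives $M_{22} = -(1-\alpha)\, g^2 + O(y)$ and
$$\det M = -y^2 g^2 \bigl[(1-\alpha)\, g g'' - (1-2\alpha)(g')^2\bigr] + O(y^3).$$
Thus $M \le 0$ near $\Sigma$ reduces to the pointwise inequality $(1-\alpha)\, g g'' \le (1-2\alpha)(g')^2$ on $\Sigma$, which for $g = (x-x_*)^p$ becomes $p \le 1/\alpha - 1$. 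Since $1/\alpha - 1 > 1$ for every $\alpha \in [0, 1/2)$, one can choose $p \in \bigl(1, \tfrac{1}{\alpha} - 1 \bigr)$, obtaining a strictly convex $g$ while preserving a strict $\alpha$-concavity inequality for $v_0$ in a neighborhood of $\Sigma$.

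The main obstacle is to close this construction globally. We complete $\partial \Omega$ to a smooth closed convex curve whose curvature is strictly positive away from $\Sigma$ and vanishes to infinite order at the endpoints of $\Sigma$, and we extend $G$ to a smooth positive function on $\ov{\Omega}$ so that $v_0 = \rho\, G$, where $\rho$ is a smooth defining function for $\Omega$ coinciding with $y$ near $\Sigma$, is smooth on $\ov{\Omega}$, vanishes on $\partial \Omega$, and has nonvanishing gradient there. To verify $\alpha$-concavity of $v_0$ throughout $\Omega$, we use the leading-order analysis near $\Sigma$; away from $\Sigma$, where $\partial \Omega$ has strictly positive curvature $\kappa$, the matrix $M$ acquires an additional strictly negative tangential contribution of order $\rho\, \kappa\, G^2$, which gives room to absorb the $G$-dependent terms; and in the deep interior, one takes $G$ close enough to a constant that $v_0$ is a small perturbation of $\rho$, which may itself be chosen strictly concave on $\Omega$, so that $\alpha$-concavity holds strictly by continuity. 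The delicate piece is the transition zone near the endpoints of $\Sigma$, where the boundary curvature switches on and the two regimes of the analysis must be matched; this is handled by a careful choice of the boundary profile and of the higher-order expansion of $G$ in $y$. Once $v_0$ is so constructed, standard free-boundary regularity for the porous medium equation with $\nabla v_0 \neq 0$ on $\partial \Omega$ guarantees that for small time, $\partial \Omega_t$ is smooth and evolves with outward normal velocity $|\nabla v|$, and the profile analysis on $\Sigma$ yields the non-convexity of $\Omega_t$ for all $t \in (0,\delta)$.
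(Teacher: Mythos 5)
Your overall strategy mirrors the paper's: put a flat segment $\Sigma$ on the boundary of a convex domain, build $\alpha$-concave initial data whose normal derivative along $\Sigma$ is \emph{strictly convex}, and then argue via Darcy's law that the free boundary immediately bulges non-convexly. Your local reduction of $\alpha$-concavity near $\Sigma$ to the inequality $(1-\alpha)\,g g'' \le (1-2\alpha)(g')^2$, and the observation that $g(x) = (x-x_*)^p$ with $p \in (1, 1/\alpha-1)$ realizes strict convexity of $g$ while preserving this inequality for $\alpha \in [0,1/2)$, is exactly the mechanism behind the initial data used by the paper (which the paper imports wholesale from their earlier work, cited as Theorem~\ref{thm1}; the key hypothesis (d) there is precisely ``$\partial v_0 / \partial y(x,0)$ is positive and strongly convex''). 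So the initial-data side of your plan is right in spirit, though you leave the global construction (especially the matching near the endpoints of $\Sigma$) as an acknowledged sketch.

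The genuine gap is in your treatment of the free boundary velocity. You assert that ``standard free-boundary regularity for the porous medium equation with $\nabla v_0 \ne 0$ on $\partial\Omega$ guarantees that for small time $\partial\Omega_t$ is smooth and evolves with outward normal velocity $|\nabla v|$,'' and even stronger, that the entire segment $\Sigma$ evolves to first order into the graph $\{y = -t\,|\nabla v_0|(x,0)\}$. The paper goes out of its way to point out that a clean version of even the pointwise statement --- that the one-sided boundary velocity at $t=0^+$ equals $|\nabla v_0|$ --- does not seem to be stated in the literature, and the entire content of Sections~\ref{BG}--\ref{velocity} is devoted to proving the two-sided bound it needs (Theorem~\ref{thmballs}), by comparison with Barenblatt solutions (interior barrier) and Graveleau focusing solutions (exterior barrier) under interior/exterior sphere conditions. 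Your proposal treats this as folklore, which is exactly where the paper's rigor resides. Moreover, the paper's actual argument is weaker and cleaner than what you claim: it never needs uniform first-order graph evolution over all of $\Sigma$; it applies Theorem~\ref{thmballs} only at \emph{three} points $(-1/2,0)$, $(0,0)$, $(1/2,0)$, obtaining one-sided inclusions of the form $(-1/2, -S_- t) \in \ov{\Omega_t}$, $(0, -S_0 t) \notin \Omega_t$, $(1/2, -S_+ t) \in \ov{\Omega_t}$ with $S_0 < (S_-+S_+)/2$, which already contradicts convexity of $\Omega_t$. To make your proposal a proof, you would need to substitute a rigorous comparison-based velocity bound (such as Theorem~\ref{thmballs}) for the appeal to ``standard regularity,'' and then you only need it at finitely many points, not along the whole segment.
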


The result of Daskalopoulos-Hamilton-Lee \cite{DHL} implies in particular that if the initial data is $\alpha$-concave with $\alpha \ge 1/2$, the domain $\Omega_t$ remains convex for all $t$.  Hence Theorem \ref{maintheorem} is sharp in this sense.

The proof of Theorem \ref{maintheorem} uses as initial data $v_0$ the function constructed in \cite{CW3} which was originally applied to the case of the one-phase Stefan problem.  The free boundary for the solution $v(x,t)$ of the one-phase Stefan problem moves in the outward normal direction with speed $|\nabla v|$, given certain compatibility conditions for the initial data.  For the porous medium equation, the free boundary also moves with speed equal to $|\nabla v|$ for solutions which are smooth up to the free boundary, and so the ``formal'' proof of Theorem \ref{maintheorem} is no different from the argument in \cite{CW3}.  

However, the precise result we require, giving the speed of the boundary at the initial time $t=0$, does not seem to be explicitly stated in the literature (see for example \cite{CF1, CF2, CVW, DH, K} for some closely related results).  For this reason, we provide a simple proof of the result that we need (Theorem \ref{thmballs} below) which gives bounds on the velocity of a point on the boundary from the initial time, assuming interior and exterior ball conditions for the initial data.  This result, which uses only the well-known Barenblatt and Graveleau solutions to the PME and a local comparison theorem, may be of independent interest.

The outline of our paper is as follows.  In Section \ref{prelim} we  state some well-known results about the porous medium equation and recall the construction of \cite{CW3}.  In Section \ref{interior} we prove Theorem \ref{maintheorem0}, the main result of this paper.  In Section \ref{BG} we recall the Barenblatt and Graveleau solutions of the porous medium equation and their basic properties.  These are used to prove a general result about the velocity of the boundary, proved in Section \ref{velocity}, which is then used to establish Theorem \ref{maintheorem} in Section \ref{proofmainthm}.
 
\pagebreak[3]
\section{Preliminaries} \label{prelim}

\subsection{Solutions of the porous medium equation} \label{subsec}  Let $u_0$ be a nonnegative function in $L^1(\mathbb{R}^n)$, and let $m>1$.
 Fix $0<T\le \infty$.  A solution to the porous medium equation (\ref{PME}) for time $t \in [0,T)$ with initial data $u_0$ is a nonnegative function $u(x,t)$ in $C([0,T) : L^1(\mathbb{R}^n))$ such that 
$$u^m \in L^1_{\textrm{loc}}([0,T) : L^1(\mathbb{R}^n)),  \quad \textrm{and}\quad u_t, \Delta u^m \in L^1_{\textrm{loc}}(\mathbb{R}^n \times (0,T)),$$
with
$$u_t = \Delta u^m, \quad \textrm{a.e. in } \mathbb{R}^n \times (0,T)$$
and 
$$u(t) \rightarrow u_0 \ \textrm{in } L^1(\mathbb{R}^n) \quad \textrm{as } t\rightarrow 0.$$
There exists a unique such solution $u(x,t)$ with any initial data $u_0$ as above  \cite{BC, P}, known as  a \emph{strong $L^1$ solution} (see the expositions \cite[Theorems 9.2, 9.12]{V} and \cite[Section 5.1]{F} and the references therein).  
If, in addition, we assume that $u_0$ is compactly supported and H\"older continuous with H\"older exponent $\gamma>0$ say, then 
the solution $u(x,t)$ is continuous on $\mathbb{R}^n \times [0,\infty)$ \cite{CF1}.  On the set $\{ u>0 \}\cap \{ t>0 \}$, the solution $u$ is smooth.  Moreover,  this smoothness extends to $t=0$ if the initial data $u_0$ is smooth on the set $\{ u_0 >0 \}$ (see  \cite[Proposition 7.21]{V} and the remark afterwards).

Solutions of the porous medium equation satisfy a local comparison principle  \cite{ACP, DK}, which we state without proof.  We give this in terms of locally defined continuous solutions, since this is the context in which we will need to use it later. Let $B$ be a ball in $\mathbb{R}^n$ and let 
$u(x,t)$, $\tilde{u}(x,t)$ be nonnegative continuous functions on $\ov{B} \times [0,T)$ satisfying $u_t, \Delta u^m \in L^1_{\textrm{loc}}(B \times (0,T))$ and
$$u_t = \Delta u^m, \quad \textrm{a.e. in } B \times (0,T),$$
with initial data $u(x,0)= u_0(x)$ and $\tilde{u}(x,0)=\tilde{u}_0(x)$ for $x \in \ov{B}$.
Then we have the following comparison theorem (see for example \cite[Lemma 9.30]{V}).

\begin{theorem} \label{comparison}
Let  $u, \tilde{u}$ be as above.  Assume that 
$$u_0(x) \le \tilde{u}_0(x), \quad \textrm{for } x\in \ov{B}$$
 and 
 $$u(x,t) \le \tilde{u}(x,t), \quad  \textrm{for } (x,t) \in \partial B \times [0,T).$$
  Then 
$$u(x,t) \le \tilde{u}(x,t), \quad \textrm{for } (x,t) \in B \times [0,T).$$
\end{theorem}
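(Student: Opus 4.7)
The plan is to use a standard duality argument that reduces the comparison to the solvability of a suitable backward-in-time linear parabolic problem. Set $w := u - \tilde u$ and $W := u^m - \tilde u^m$. Since
\begin{equation*}
W = a\, w, \qquad a(x,t) := m \int_0^1 \bigl[\theta u(x,t) + (1-\theta)\tilde u(x,t)\bigr]^{m-1} d\theta \ge 0,
\end{equation*}
and $a$ is bounded above by the continuity of $u, \tilde u$ on $\ov B$, the difference $w$ satisfies the (possibly degenerate) linear equation $w_t = \Delta(a w)$ in the weak sense on $B \times (0,T)$.

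Fix $0 < T' < T$ and a nonnegative smooth function $\eta$ on $\ov B$ vanishing on $\partial B$. The strategy is to produce a test function $\phi \ge 0$ that is a classical solution of the backward adjoint problem
\begin{equation*}
\phi_t + a \Delta \phi = 0 \quad \text{in } B \times (0,T'), \qquad \phi|_{\partial B} = 0, \qquad \phi(\cdot, T') = \eta.
\end{equation*}
Multiplying $w_t = \Delta W$ by $\phi$, integrating over $B \times (0, T')$, and integrating by parts twice in $x$ and once in $t$ (using $\phi|_{\partial B} = 0$ and $W = a w$) yields the identity
\begin{equation*}
\int_B w(T')\, \eta \, dx \;=\; \int_B w(0)\, \phi(0)\, dx \;-\; \int_0^{T'}\!\!\int_{\partial B} W\, \partial_\nu \phi\, dS\, dt.
\end{equation*}
The first term on the right is nonpositive because $w(0) \le 0$ by hypothesis and $\phi(0) \ge 0$. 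For the boundary term, $u \le \tilde u$ on $\partial B$ gives $W \le 0$ there, while $\phi \ge 0$ in $B$ with $\phi = 0$ on $\partial B$ forces $\partial_\nu \phi \le 0$ at the boundary (outward normal); hence $W\, \partial_\nu \phi \ge 0$ and the boundary term is also nonpositive. Thus $\int_B w(T')\, \eta \, dx \le 0$ for every admissible $\eta$, and continuity of $w(T')$ yields $w(T') \le 0$ on $\ov B$. Since $T' \in (0, T)$ was arbitrary, the conclusion follows.

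The main obstacle is the construction and regularity of the adjoint test function $\phi$: the coefficient $a$ is only bounded and nonnegative in general, may be only H\"older continuous, and degenerates on the set where both $u$ and $\tilde u$ vanish, so the backward problem is degenerate and of low regularity. The standard remedy is to replace $a$ by a smooth, uniformly positive coefficient $a_\varepsilon$ (obtained by mollification and adding $\varepsilon > 0$), solve the resulting strictly parabolic backward problem classically for $\phi_\varepsilon \ge 0$ (nonnegativity by the maximum principle, and the required sign of $\partial_\nu \phi_\varepsilon$ by the Hopf boundary point lemma), derive uniform $L^\infty$ bounds on $\phi_\varepsilon$ and $L^2$ bounds on $\sqrt{a_\varepsilon}\, \Delta \phi_\varepsilon$ by testing against appropriate multipliers, and pass to the limit $\varepsilon \to 0$ in the integral identity above. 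This approximation step is the technical core of the argument and is carried out in detail in \cite{ACP, DK} and \cite[Chapter~9]{V}.
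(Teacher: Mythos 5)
The paper states this local comparison principle \emph{without proof}, referring the reader to \cite{ACP, DK} and \cite[Lemma 9.30]{V}. Your argument is the standard Oleinik-type duality proof used in those references: linearize via $W = a w$, test against a nonnegative solution of a (regularized) backward adjoint problem vanishing on $\partial B$, and read off the sign from $w(0)\le 0$, $\phi(0)\ge 0$, $W|_{\partial B}\le 0$, and $\partial_\nu\phi|_{\partial B}\le 0$. The integral identity and the sign analysis are correct, and you have correctly located the technical core in the regularization $a\mapsto a_\varepsilon$, the existence and a priori estimates for $\phi_\varepsilon$, and the vanishing of the error term $\int_0^{T'}\!\!\int_B w\,(a-a_\varepsilon)\Delta\phi_\varepsilon$ as $\varepsilon\to 0$; deferring that part to the cited literature is reasonable here, since the paper itself does exactly the same. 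One small point worth flagging if you ever write this out in full: the hypotheses give $u_t,\Delta u^m\in L^1_{\mathrm{loc}}(B\times(0,T))$ only for positive times, so the integration by parts in $t$ should be performed on $[\tau,T']$ with $\tau>0$ and then $\tau\to 0$ using the assumed continuity of $u,\tilde u$ on $\ov B\times[0,T)$.
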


\subsection{Initial data for boundary concavity breaking}  The authors constructed a specific $\alpha$-concave function supported on a bounded convex domain \cite[Theorem 3.1]{CW3}:

\begin{theorem}  \label{thm1} For each $\alpha \in [0,1/2)$, 
there is a non-negative function $v_0$ on $\mathbb{R}^2$ and an open bounded convex set $\Omega$ lying in the upper half plane such that its boundary $\partial \Omega$ contains the line segment $\{(x, y): x\in [-1, 1], y=0\}$, and 
\begin{enumerate}
\item[(a)] $v_0$ is positive on $\Omega$, vanishes on $\Omega^c$, and is smooth on $\ov{\Omega}$.
\item[(b)] $v_0$ is $\alpha$-concave on $\Omega$.
\item[(c)] $\nabla v_0$ does not vanish at any point of $\partial \Omega$.
\item[(d)] The map
$$x\mapsto \frac{\partial v_0}{\partial y}(x, 0), \qquad \textrm{for } x \in [-1, 1],$$
is positive and strongly convex.
\end{enumerate}
\end{theorem}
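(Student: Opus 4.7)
The plan is to construct the pair $(\Omega,v_0)$ explicitly and verify (a)--(d) by direct calculation; this is carried out in detail in our earlier paper \cite{CW3}, and I outline the strategy here.

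First, I would choose the boundary data $\psi := (\partial v_0/\partial y)|_{y=0}$ to be a smooth, strictly positive, strongly convex function $\psi:[-1,1]\to\mathbb{R}$ with $\psi'>0$ throughout (so that $\psi$ has no interior critical point). That this last requirement is forced is a short but crucial preliminary observation: since (c) demands that $v_0$ vanish to exactly first order on $\partial\Omega$, near the flat segment we must be able to write $v_0(x,y)=y\,h(x,y)$ with $h(x,0)=\psi(x)>0$, and Taylor-expanding $F:=v_0^{\alpha}$ in $y$ gives, as $y \to 0^{+}$,
\begin{equation*}
F_{yy} \sim \alpha(\alpha-1)\,y^{\alpha-2}\psi^{\alpha}, \qquad F_{xy} \sim \alpha^{2}\,y^{\alpha-1}\psi^{\alpha-1}\psi', \qquad F_{xx} = O(y^{\alpha}).
\end{equation*}
The Hessian negativity condition then reduces at leading order to the pointwise inequality $(1-2\alpha)(\psi')^{2} \geq (1-\alpha)\,\psi\,\psi''$ on $[-1,1]$. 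For $\alpha\in[0,1/2)$ the factor $(1-2\alpha)/(1-\alpha)$ is positive, and the inequality is satisfied strictly by, for instance, $\psi(x)=(x+C)^{k}$ with $C$ large and $1<k<1/\alpha-1$ (any $k>1$ when $\alpha=0$), since for such $\psi$ one has $\psi\psi''/(\psi')^{2}=(k-1)/k$.

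Next, I would take $\Omega\subset\{y\geq 0\}$ to be a convex domain whose boundary contains $[-1,1]\times\{0\}$ and whose remaining boundary component is smooth and strictly convex---concretely $\Omega=\{(x,y):-a<x<a,\ 0<y<\eta(x)\}$ for some $a>1$ and a smooth concave $\eta$ with $\eta(\pm a)=0$, chosen so that $\partial\Omega$ is smooth. I would then define $v_0$ on $\ov{\Omega}$ by smoothly gluing the explicit expression $y\,\psi(x)$ on a neighborhood of the flat segment to a function of the form (smooth positive weight)$\times$(defining function of $\Omega$) near the remaining boundary, arranging that $v_0\in C^{\infty}(\ov{\Omega})$, $v_0>0$ on $\Omega$, and $v_0$ vanishes to exactly first order on $\partial\Omega$. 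Properties (a), (c), and (d) are then immediate from the construction.

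The main obstacle will be verifying (b), that $v_0^{\alpha}$ is concave on all of $\Omega$. Near the flat segment this is exactly the leading-order calculation above and is ensured by the choice of $\psi$. The delicate part is controlling the Hessian of $v_0^{\alpha}$ on the rest of $\Omega$, where both the tangential and normal contributions from the upper boundary must be balanced; one can force $\mathrm{Hess}(v_0^{\alpha})\leq 0$ there by taking $\Omega$ sufficiently thin in the $y$-direction, so that the strongly negative $\partial_{y}^{2}$ contribution from vanishing at $y=\eta(x)$ dominates the positive tangential curvature generated by $\psi''>0$ and by the curvature of $\eta$. This tension between the strong convexity of $\psi$---which is precisely what will later drive the breaking of convexity of the positivity set---and the global $\alpha$-concavity of $v_0^{\alpha}$ is exactly what pins the restriction to $\alpha<1/2$, consistent with the sharpness given by the root concavity preservation result of \cite{DHL} at $\alpha=1/2$.
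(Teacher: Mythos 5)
The paper you are reviewing does not itself prove this theorem: it is quoted verbatim from the authors' earlier work \cite{CW3}, so there is no internal proof in the present source to compare your outline against. That said, your sketch correctly captures the essential structure of the construction. The leading-order expansion of $\mathrm{Hess}(v_0^{\alpha})$ near the flat segment, the reduction of negative semi-definiteness to the pointwise constraint $(1-2\alpha)(\psi')^{2}\geq(1-\alpha)\psi\psi''$, the observation that $\psi'$ must be nowhere vanishing (else the $F_{xx}$ term $\alpha y^{\alpha}\psi^{\alpha-1}\psi''$ would be positive at a critical point of $\psi$), and the choice $\psi(x)=(x+C)^{k}$ with $1<k<1/\alpha-1$ — together with the fact that such a $k$ exists iff $\alpha<1/2$ — are all correct and in fact pinpoint exactly why the threshold is $1/2$.

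However, as a proof the proposal has two genuine gaps, both of which you acknowledge but neither of which you close. First, the assertion that ``one can force $\mathrm{Hess}(v_0^{\alpha})\leq 0$ [away from the flat segment] by taking $\Omega$ sufficiently thin in the $y$-direction'' is the hard step of the whole construction; the heuristic that a strongly negative $\partial_y^2$ contribution from the upper boundary dominates is plausible, but the off-diagonal $\partial_x\partial_y$ entries also scale with the thinness and must be estimated, and the required inequality is a genuine $2\times 2$ determinant bound on all of $\Omega$, not just near $y=0$. Second, the candidate domain $\Omega=\{-a<x<a,\ 0<y<\eta(x)\}$ with $\eta$ concave and $\eta(\pm a)=0$ necessarily has corners at $(\pm a,0)$ (smoothness there would force $\eta'(\pm a)=0$, which is incompatible with a nontrivial concave $\eta$ vanishing at the endpoints), so the gluing of $y\,\psi(x)$ to the ``weight $\times$ defining function'' ansatz near those corners, while keeping $v_0$ smooth with nonvanishing gradient \emph{and} $\alpha$-concave, needs a separate careful treatment. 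Since both of these are deferred to \cite{CW3}, your proposal is best read as an accurate high-level summary of the construction rather than an independent proof; if you want a self-contained argument you must carry out the global Hessian estimate and the corner construction explicitly.
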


 This theorem will give us the initial data for our solution in Theorem \ref{maintheorem}.

\section{Interior concavity breaking} \label{interior}

 In this section we give the proof of Theorem \ref{maintheorem0}.

We begin by computing some evolution formulae for smooth positive solutions of the porous medium equation.  Let $v$ be a local smooth positive solution of (\ref{PME2}).  For $\alpha>0$, write $w= v^{\alpha}$ and compute
\begin{equation} \label{eqnw1}
\frac{\partial w}{\partial t} = (m-1) w^{1/\alpha} w_{kk} + \frac{1}{\alpha} (1 + (m-1)(1-\alpha)) w^{1/\alpha-1} w_k^2,
\end{equation}
where  subscripts  denote (spatial) partial derivatives, and by the usual convention we are omitting the summation sign in the  $k$ indices.

Our example relies crucially on the evolution of the second derivative of $w$ in a fixed direction, which without loss of generality we take to be $x_1$.  Differentiating (\ref{eqnw1}) twice with respect to $x_1$ and simplifying gives the following.
\begin{equation}\label{w11talpha}
\begin{split}
\frac{\partial}{\partial t}w_{11} = {} & (m-1) w^{1/\alpha} w_{kk11} + \frac{2(m-1)}{\alpha} w^{1/\alpha-1} w_{1} w_{kk1} \\ {} &  + \frac{(m-1)}{\alpha} \left( \frac{1}{\alpha} -1 \right) w^{1/\alpha-2} w_1^2 w_{kk}   +   \frac{(m-1)}{\alpha} w^{1/\alpha-1}w_{11} w_{kk}  \\
 {} & +\frac{1}{\alpha} \left( 1 + (m-1) (1-\alpha) \right) \bigg\{   \left( \frac{1}{\alpha} -2 \right) \left( \frac{1}{\alpha} -1 \right)w^{1/\alpha-3}w^2_1 w_k^2 \\ 
 {} & + \left( \frac{1}{\alpha} -1 \right)w^{1/\alpha-2}w_{11} w_k^2 +  4 \left( \frac{1}{\alpha} -1 \right)w^{1/\alpha-2}w_1 w_k w_{k1} \\
 {} & + 2w^{1/\alpha-1}w^2_{1k} + 2w^{1/\alpha-1}w_kw_{k11} \bigg\}.
 \end{split}\end{equation}

For the case $\alpha=0$ we now write $w =\log v$ and compute
\begin{equation} \label{eqnw2}
\frac{\partial w}{\partial t} = (m-1) e^w w_{kk} + me^w w_k^2,
\end{equation}
and
\begin{equation} \label{w11tlog}
\begin{split}
\frac{\partial}{\partial t}w_{11} = {} & (m-1) e^w w_{kk11} + 2(m-1)e^w w_1 w_{kk1} + (m-1) e^w w_1^2 w_{kk} \\
{} & + (m-1) e^w w_{11} w_{kk} +
me^w w_1^2 w_k^2 + m e^w w_{11} w_k^2 + 4m e^w w_1w_k w_{k1} \\ {}&  +2me^w w_{1k}^2 + 2me^w w_k w_{k11}.
\end{split}
\end{equation}

The main technical result of this section is the following lemma.  It shows that for $\alpha \in [0,1] \setminus \{ \frac{1}{2} \}$ we can find a positive concave function $w$ defined in a small ball such that, at the origin, we have $w_{11}=0$ and $\partial_t w_{11}>0$, in the sense of (\ref{w11talpha}) and (\ref{w11tlog}) above.  Write $B_{\rho}(0)$ for the open ball of radius $\rho$ in $\mathbb{R}^2$ centered at the origin.

\begin{lemma}\label{localinitialdata}
 For each $\alpha \in [0,1]\setminus \{ \frac{1}{2}\}$, there exists a smooth positive function $w$ on $\ov{B_{\rho}(0)}$ for some $\rho>0$ satisfying
 \begin{enumerate}
\item[(i)] $\displaystyle{w_{11}(0)=0, \ w_{22}(0)<0, \ w_{12}(0)=0}$.
\vspace{1pt}

\item[(ii)] $\displaystyle{(D^2w) < 0 \textrm{ on } \ov{B_{\rho}(0)}\setminus \{ 0\}}$.
\vspace{1pt}

\item[(iii)] If $\alpha\neq 0$ then the right hand side of \eqref{w11talpha} is positive at the origin.  If $\alpha=0$ then the right hand side of  \eqref{w11tlog} is positive at the origin.
\end{enumerate}

\end{lemma}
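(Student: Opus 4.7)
The plan is to construct $w$ as an explicit polynomial in $(x_1, x_2)$ with a handful of free parameters, and then tune them to satisfy (i)--(iii). Consider the ansatz
\[
w(x_1, x_2) \;=\; W + p\, x_1 - \tfrac{c}{2} x_2^2 + \tfrac{M}{2} x_1 x_2^2 + \tfrac{A_1}{24} x_1^4 + \tfrac{A_2}{4} x_1^2 x_2^2,
\]
where $W, c > 0$ and $p, M, A_1, A_2 \in \mathbb{R}$ are to be chosen (with $A_1, A_2 < 0$). Direct differentiation at the origin gives $w(0) = W$, $w_1(0) = p$, $w_2(0) = 0$, $w_{11}(0) = w_{12}(0) = w_{111}(0) = w_{112}(0) = 0$, $w_{22}(0) = -c$, $w_{122}(0) = w_{221}(0) = M$, $w_{1111}(0) = A_1$, $w_{2211}(0) = A_2$, so condition (i) is immediate.

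For (ii), I expand $w_{11}(x) = \tfrac{A_1}{2} x_1^2 + \tfrac{A_2}{2} x_2^2$, $w_{12}(x) = Mx_2 + A_2 x_1 x_2$, $w_{22}(x) = -c + O(|x|)$, and compute
\[
\det D^2 w(x) \;=\; -\tfrac{c A_1}{2} x_1^2 - \Bigl(\tfrac{c A_2}{2} + M^2\Bigr) x_2^2 + O(|x|^3).
\]
Imposing $A_1 < 0$ and $A_2 < -2M^2/c$ makes both leading coefficients positive and forces $w_{11}(x) < 0$ for $x \ne 0$; combined with $w_{22} \approx -c < 0$, the Hessian is negative definite on $\ov{B_\rho(0)} \setminus \{0\}$ for $\rho$ small enough. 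Positivity of $w$ on this ball follows by continuity from $w(0) = W > 0$.

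For (iii), substituting into \eqref{w11talpha} (or \eqref{w11tlog} when $\alpha = 0$), the vanishing of $w_2(0), w_{11}(0), w_{12}(0), w_{111}(0), w_{112}(0)$ kills most terms, leaving (for $\alpha > 0$ and $W = 1$) the expression
\[
(m-1)(A_1 + A_2) + \tfrac{2(m-1)}{\alpha} p M - \tfrac{c(m-1)(\tfrac{1}{\alpha}-1)}{\alpha} p^2 + C_\alpha\, p^4,
\]
with $C_\alpha := \tfrac{1}{\alpha}\bigl(1+(m-1)(1-\alpha)\bigr)\bigl(\tfrac{1}{\alpha}-2\bigr)\bigl(\tfrac{1}{\alpha}-1\bigr)$; the analogous expression for $\alpha = 0$ has quartic coefficient $m$. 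A direct sign check shows that this $p^4$ coefficient is strictly positive for $\alpha \in [0, 1/2)$, vanishes at $\alpha = 1/2$ and $\alpha = 1$, and is strictly negative on $(1/2, 1)$.

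Setting $A_1 = -\ve_1$ and $A_2 = -2M^2/c - \ve_2$ for small $\ve_i > 0$ (saturating the constraint from (ii)), three regimes must be handled. For $\alpha \in [0, 1/2)$ I fix $M > 0$ and send $p \to \infty$, exploiting the positive $p^4$ term. For $\alpha = 1$ the $p^2$ and $p^4$ coefficients both vanish, leaving $(m-1)[(A_1 + A_2) + 2 p M]$, which is positive for $p$ large. The main obstacle is the regime $\alpha \in (1/2, 1)$, where no single parameter can be sent to infinity: I instead complete the square in $p$ near $p^* = M/(c\beta)$ with $\beta = 1/\alpha - 1 \in (0, 1)$; the quadratic maximum combined with $(m-1) A_2$ produces a net contribution $\tfrac{(m-1)(1-\beta)}{c\beta} M^2$, whose sign is favorable precisely because $\alpha > 1/2$, and this dominates the $O(M^4)$ quartic correction for $M > 0$ chosen small. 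The delicate part of the argument is this last case, since the positivity depends on exploiting the sign of $1 - \beta$ after a careful balance between the $A_2$ constraint from (ii) and the quadratic maximum in $p$.
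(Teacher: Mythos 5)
Your proposal is correct and follows essentially the same strategy as the paper: take a degree-four polynomial ansatz in $(x_1,x_2)$ with the prescribed jet structure at the origin (so that $w_{11}(0)=w_{12}(0)=w_{111}(0)=w_{112}(0)=0$), impose constraints on the quartic coefficients to get $\det D^2 w>0$ away from the origin, and then tune the remaining free parameter(s) so that the surviving terms in \eqref{w11talpha} (or \eqref{w11tlog}) are positive. The difference is one of bookkeeping: you use a single unified parametrization $(W,p,c,M,A_1,A_2)$ for all $\alpha$, whereas the paper hard-codes two separate polynomials for the cases $\alpha\in[0,\tfrac12)\cup\{1\}$ and $\alpha\in(\tfrac12,1)$. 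In the delicate regime $\tfrac12<\alpha<1$ you fix $c$ and send $M\to 0$ with $p=p^*=M/(c\beta)$, whereas the paper sends its parameter $b\to\infty$, which in your variables is $c\to\infty$, $M\to\infty$, $p\to 0$ with $M^2/c$ held fixed; both limits exploit the same sign $\tfrac{1}{1-\alpha}-2>0$, equivalently $\tfrac{1-\beta}{\beta}>0$, and dominate the unfavorable $O(p^4)$ term. Your parametrization makes the role of the $\alpha=\tfrac12$ threshold (where $C_\alpha$ changes sign) somewhat more transparent, but the underlying argument is the same.
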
 

\begin{proof} We consider two separate cases.

\bigskip
\noindent
{\bf Case 1.} \  $0\le \alpha<1/2$ or $\alpha =1$.
\bigskip

For a positive constant $a$ to be determined, define
$$w =1+  a x_1 - x_2^2 +  x_1 x_2^2 - x_1^4 -2x_1^2x_2^2,$$
on $\ov{B_{\rho}(0)}$, where $\rho>0$ is also to be determined.
  Then we have
$$(D^2w) = \begin{pmatrix} -12  x_1^2  -4x_2^2 & 2x_2 - 8x_1x_2 \\ \ & \ \\ 2x_2 - 8x_1x_2 & -2 +2x_1 - 4x_1^2 \end{pmatrix}.$$
Condition (i) is easily verified.   
For (ii), we note that 
$$\det(D^2 w) = 24x_1^2 + 4x_2^2 +  \textrm{O}( (x_1^2+x_2^2)^{3/2}),$$  and hence, choosing $\rho>0$ sufficiently small, we have $\det(D^2 w)>0$ and $w_{11}<0$ in $\ov{B_{\rho}(0)} \setminus \{0 \}$ which together imply (ii).

To verify condition (iii), observe that at the origin,
\begin{equation} \label{wcalc}
\begin{split}
& w_{1111}=-24, \  w_{2211}=-8, \  w_{221} =2, \  w_{22}=-2, \  w_1 =a, \ w=1,\\
& w_{111}=w_{211}=w_{11}=w_{12}=w_2=0.
\end{split}
\end{equation}
Assume $\alpha \neq 0$.  The right hand side of \eqref{w11talpha} is positive at the origin precisely when
\begin{equation}\label{ee1}
\begin{split} &  -32(m-1) + \frac{4(m-1)}{\alpha} a - \frac{2(m-1)}{\alpha} \left(\frac{1}{\alpha}-1 \right) a^2 \\
 {} & + \frac{1}{\alpha} \left( \frac{1}{\alpha} -2 \right) \left( \frac{1}{\alpha} -1 \right) \left( 1 + (m-1) (1-\alpha) \right) a^4 >0.
\end{split}
\end{equation}
If $0<\alpha<1/2$ we see that the last term on the left hand side of (\ref{ee1}) is strictly positive and so we choose $a$ sufficiently large so that this term dominates.  The inequality \eqref{ee1} and thus condition (iii) holds. If $\alpha =1$ then only the first two terms on the left hand side of (\ref{ee1}) are nonzero, and so it suffices to choose $a>8$.
 
 For $\alpha=0$, using again (\ref{wcalc}), the right hand side of \eqref{w11tlog} is positive at the origin when
 \begin{equation} \label{ee3}
 -32(m-1) + 4(m-1) a - 2a^2 (m-1) + ma^4>0,
 \end{equation}
 and this holds for $a$ sufficiently large.  This confirms (iii) in this case.
 
 To ensure that $w$ is positive on $\ov{B_{\rho}(0)}$ we note that $w(0)=1$ and shrink $\rho$ if necessary.

\bigskip
\noindent
{\bf Case 2.} \  $1/2<\alpha < 1$.  
\bigskip

 Consider for a constant $b>0$ to be determined,
$$w =1+\frac{ \alpha (3/2-\alpha)^{1/2}}{b(1-\alpha)} x_1 - b^2 x_2^2+b (3/2-\alpha)^{1/2} x_1x_2^2-\frac{x_1^4}{12b^2}-x_1^2x_2^2.$$  Then we have
$$(D^2w) = \begin{pmatrix} -x_1^2/b^2 -2x_2^2 \quad & 2b(3/2-\alpha)^{1/2}x_2-4x_1x_2 \\ \ & \ \\ 2b(3/2-\alpha)^{1/2}x_2-4x_1x_2 \quad & -2b^2 + 2b(3/2-\alpha)^{1/2}x_1-2x_1^2 \end{pmatrix},$$ 
from which (i) follows.  For (ii), compute
$$\det(D^2 w) = 2x_1^2 + 4b^2(\alpha-1/2)x_2^2  + \textrm{O}( (x_1^2+x_2^2)^{3/2}).$$ 
Choosing $\rho>0$ sufficiently small, we have $\det(D^2 w)>0$ and $w_{11}<0$ in $\ov{B_{\rho}(0)} \setminus \{0 \}$, giving (ii).

For (iii), we note that at the origin we have
\begin{equation} \label{wcalc2}
\begin{split}
& w_{1111}=-2/b^2, \  w_{2211}=-4, \  w_{221} =2b(3/2-\alpha)^{1/2}, \\ 
&  w_{22}=-2b^2, \  w_1 =\frac{\alpha(3/2-\alpha)^{1/2}}{b(1-\alpha)}, \ w=1,\\
& w_{111}=w_{211}=w_{11}=w_{12}=w_2=0.
\end{split}
\end{equation}
Then a short calculation shows that the right hand side of \eqref{w11talpha} is positive at the origin precisely  when 
\begin{equation}\label{ee2}\begin{split}
{} & -(m-1) \left( \frac{2}{b^2} + 4 \right) + \frac{2(m-1)(3/2-\alpha)}{1-\alpha}  \\
{} & + \frac{(1-2\alpha) (1+(m-1)(1-\alpha)) \alpha (3/2-\alpha)^2}{b^4(1-\alpha)^3} >0.
\end{split}\end{equation}
We can write the left hand side of (\ref{ee2}) as
$$\frac{2(m-1)}{1-\alpha} \left( \alpha - \frac{1}{2} - \frac{1-\alpha}{b^2}\right) - \frac{C_{m, \alpha}}{b^4},$$
for a positive constant $C_{m, \alpha}$ depending only on $m, \alpha$.  Since $1/2<\alpha<1$, we may choose $b$ sufficiently large, depending only on $\alpha$ and $m$, so that the inequality (\ref{ee2}) holds.
\end{proof}

We can now give the proof of Theorem \ref{maintheorem0}.

\begin{proof}[Proof of Theorem \ref{maintheorem0}]
Fix $\alpha \in [0,1]\setminus \{1/2\}$ and let $w$ be the function constructed in Lemma \ref{localinitialdata} which is positive on $\ov{B_{\rho}(0)}$ for some $\rho>0$.  We will use $w$ to construct a function $v_0$ with the properties listed in Theorem \ref{maintheorem0} on $B_{\rho}(0)$ instead of the unit ball $B$.  The theorem will then follow after scaling.

First consider the case $\alpha \neq 0,1$.  We define a smooth auxiliary function $F$ on $\ov{B_{\rho}(0)}$ as follows.  Let $f: [0,\rho] \rightarrow \mathbb{R}$ be a continuous decreasing concave function, smooth on $[0,\rho)$ satisfying
$$f(r) = \left\{ \begin{array}{ll} c_{\rho,\alpha} & \quad 0\le r \le \frac{\rho}{4} \\ (\rho-r)^{\alpha} & \quad \frac{\rho}{2} \le r\le \rho, \end{array} \right.$$
for $c_{\rho,\alpha}$ a constant slightly larger than $f(\rho/2)$, which we can take to be:
$$c_{\rho, \alpha} = \left( 1+\frac{\alpha}{4} \right) \left( \frac{\rho}{2} \right)^{\alpha}.$$
Then define $F: \ov{B_{\rho}(0)} \rightarrow \mathbb{R}$ by $F(x) = f(|x|)$, which is a concave function on $\ov{B_{\rho}(0)}$, smooth on $B_{\rho}(0)$ and vanishing on $\partial B_{\rho}(0)$.  Notice also that all the derivatives of $F$ vanish in the quarter-sized ball $B_{\rho/4}(0)$.  We have $f', f''<-\frac{1}{C}<0$ on the interval $[\rho/2, \rho)$ for a uniform positive constant $C$, by which we mean a constant that depends only on $\rho$ and $\alpha$.  Hence 
\begin{equation} \label{D2F}
D^2 F \le - \frac{1}{C} \textrm{Id}, \qquad \textrm{on } B_{\rho}(0) \setminus B_{\rho/2}(0),
\end{equation}
after increasing $C$ if necessary (see for example \cite[Lemma 2.1]{CW3}).

Now let $\psi: \ov{B_{\rho}(0)} \rightarrow [0,1]$ be a smooth cut-off function, equal to $1$ on $\ov{B_{\rho/2}(0)}$ and equal to $0$ on $\ov{B_{\rho}(0)}\setminus B_{3\rho/4}(0)$.  We may assume that 
\begin{equation} \label{Dpsi}
|D\psi| + |D^2\psi| \le C,
\end{equation}
 for a uniform constant $C$.

Define a function $\tilde{w}: \ov{B_{\rho}(0)} \rightarrow \mathbb{R}$ by
$$\tilde{w} = AF + \psi w,$$
for $A$ a large positive constant to be determined.  Compute
$$D^2 \tilde{w} =  AD^2 F + \psi D^2 w + (D^2 \psi ) w + 2D\psi \cdot D w.$$
On $B_{\rho/2}(0)$, $\psi$ is identically equal to 1 and hence  
$$D^2 \tilde{w} = AD^2 F + D^2 w \le 0,$$
as matrices, and $D^2\tilde{w} <0$ on $B_{\rho/2}(0)\setminus \{ 0 \}$.
  On $B_{\rho}(0) \setminus B_{\rho/2}(0)$ we have  from (\ref{D2F}) and (\ref{Dpsi}),
$$D^2 \tilde{w} \le \left( C  - \frac{A}{C} \right) \textrm{Id} < 0,$$
if we choose $A$ sufficiently large, depending only on $\rho$ and $\alpha$.  Hence $\tilde{w}$ is concave on $B_{\rho}(0)$ and has $D^2 \tilde{w} <0$ on the punctured ball $B_{\rho}(0)\setminus \{ 0 \}$.

We now define our initial data $v_0$ on $\ov{B_{\rho}(0)}$ by
$$v_0 = \tilde{w}^{\frac{1}{\alpha}}.$$
The function $v_0$ is smooth on the closed ball $\ov{B_{\rho}(0)}$ since near $\partial B_{\rho}(0)$ we have $v_0(x) = A^{1/\alpha}( \rho-|x|)$. Moreover the derivative of $v_0$ does not vanish on $\partial B_{\rho}(0)$.  Since $\tilde{w}$ is concave, the function $v_0$ is $\alpha$-concave.  Observe that in the quarter-sized ball $B_{\rho/4}(0)$ we have $\tilde{w}=w +A c_{\rho, \alpha}$ and hence the derivatives of $\tilde{w}$ at the origin coincide with those of $w$.

Let $v(t)$ be the solution to \eqref{PME2}  with initial condition $v(0)=v_0$.
The function $v(t)$ is smooth on the set $\{ v>0 \}$, including at $t=0$ (see the discussion in Section \ref{subsec} above).  Let $\lambda_1(x,t)$ be the largest eigenvalue of the matrix $D^2 v^{\alpha}(x,t)$.  We constructed $v_0$ so that $\lambda_1=0$ at $(x,t)=(0,0)$.   The matrix $D^2  v^{\alpha}$ at $(x,t)=(0,0)$ coincides with $D^2w$, for $w$ constructed in Lemma \ref{localinitialdata}.  In particular the eigenvalues of $D^2  v^{\alpha}$ at $(x,t)=(0,0)$ are distinct and so 
 the function $\lambda_1$ is smooth in a neighborhood of the origin for small $t>0$.  Also, all the spatial derivatives of $v^{\alpha}$ at $(x,t)=(0,0)$ coincide with  those of $w$ at the origin, and hence
$$\frac{\partial}{\partial t} \lambda_1(0,0) = \frac{\partial}{\partial t} (v^{\alpha})_{11}|_{(x,t)=(0,0)}>0,$$
from part (iii) of Lemma \ref{localinitialdata}.
Hence there exists a small $\delta>0$ such that $\lambda_1$ is strictly positive for $t \in (0,\delta)$, in a small neighborhood of the origin.  This establishes the theorem in this case.

For $\alpha=1$, we define $f$ instead using
$$f(r) = \left\{ \begin{array}{ll} 7\rho^2/8 & \quad 0\le r \le \frac{\rho}{4} \\ \rho^2-r^2 & \quad \frac{\rho}{2} \le r\le \rho, \end{array} \right.$$
and take $F(x) = f(|x|)$, $\tilde{w}=AF+\psi w$ and 
$v_0 = \tilde{w}$.  The rest of the argument follows similarly.
  
Finally, for  $\alpha=0$, we consider $f$ satisfying
$$f(r) = \left\{ \begin{array}{ll} \log ( \rho/2 ) + 1/4 & \quad 0\le r \le \frac{\rho}{4} \\ \log(\rho-r) & \quad \frac{\rho}{2} \le r\le \rho. \end{array} \right.$$
We define again $F(x) = f(|x|)$ but now take $A=1=\psi$ so that $\tilde{w}=F+ w$.  Note that $D^2 \tilde{w} <0$ on $B_{\rho}(0) \setminus \{ 0 \}$.  We define 
$$v_0 = e^{\tilde{w}},$$ which is log concave, smooth on $\ov{B_{\rho}(0)}$, is zero on $\partial B_{\rho}(0)$ and has nonvanishing derivative there.  
The rest of the argument is similar.
\end{proof}
 
 We remark that, as is made clear in the construction above, the initial data $v_0$ constructed in Theorem \ref{maintheorem0} is strictly $\alpha$-concave on the punctured ball $B \setminus \{0 \}$ in the sense that the matrix
 $$v_0 D_i D_j v_0 - (1-\alpha) D_i v_0 D_j v_0,$$
 is strictly negative definite on $B \setminus \{0 \}$.

\section{The Barenblatt and Graveleau solutions} \label{BG}

In this section we recall the construction of two well-known solutions to the porous medium equation:  Barenblatt solutions and Graveleau solutions.  The Barenblatt solutions (also known as Barenblatt-Prattle or ZKB solutions) are quadratic functions supported in expanding balls.  The Graveleau solutions are examples of ``focusing solutions'', and are described using a non-explicit  solution of an ODE.  For any fixed time $t<0$ the Graveleau solution is supported on the \emph{complement} of a ball in $\mathbb{R}^n$, which shrinks to a point as $t \rightarrow 0$. 

\subsection{Barenblatt solutions}
There is a one-parameter family of \emph{Barenblatt solutions}, whose pressure is given by
$$b=b^{(A)}(x,t) = t^{-\beta(m-1)} \left( A - \frac{\beta}{2n} t^{-2\beta/n} |x|^2 \right)_+$$
for a parameter $A>0$, where 
$$\beta = \frac{n}{n(m-1)+2}.$$
See for example \cite[Section 4.4]{V} for a derivation.
Note that $\beta(m-1)+ 2\beta/n=1$.  Observe that the Barenblatt solution is singular at $t=0$ but for any $\tau>0$ the time-translated solution $(x,t) \mapsto b(x, t+\tau)$ is a solution of the porous medium equation in the sense of Section \ref{subsec}.  

The next lemma, which is elementary and well-known, shows that by changing the constant $A$ we can find a Barenblatt solution $b^{(A)}(x,t)$ with prescribed radius of support and slope at the boundary, at some 
time $t_0$.

\begin{lemma}   \label{bl}
Given a slope $S>0$ and a radius $R>0$ we can find $A>0$ and $t_0 >0$ so that $x\mapsto b^{(A)}(x,t_0)$ has support of radius $R$ with $\lim_{|x| \rightarrow R^-} |\nabla b(x,t_0)|=S$. 
\end{lemma}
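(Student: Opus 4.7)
The plan is to compute the radius of support $R$ and boundary slope $S$ of $b^{(A)}(\cdot, t_0)$ explicitly as functions of the two parameters $A$ and $t_0$, and then simply invert the resulting system. Since the two equations are elementary power-law relations, we expect a closed-form solution; the only thing to watch is that the identity $\beta(m-1)+2\beta/n=1$ produces a convenient cancellation.

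First I would read off from the definition
$$b^{(A)}(x,t_0) = t_0^{-\beta(m-1)} \left( A - \frac{\beta}{2n} t_0^{-2\beta/n} |x|^2 \right)_+$$
that the support at time $t_0$ is the ball of radius
$$R = \sqrt{\tfrac{2nA}{\beta}}\; t_0^{\beta/n}.$$
Next, differentiating, $|\nabla b^{(A)}(x,t_0)| = \tfrac{\beta}{n}\, t_0^{-\beta(m-1)-2\beta/n}\, |x|$ on the interior of the support, and taking the limit $|x|\to R^-$, together with the identity $\beta(m-1)+2\beta/n=1$, gives the remarkably clean formula
$$S = \frac{\beta R}{n\, t_0}.$$

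From here the inversion is immediate. I would set
$$t_0 = \frac{\beta R}{n\, S},$$
which is positive since $R, S > 0$, and then solve the first equation for $A$:
$$A = \frac{\beta R^2}{2n}\; t_0^{-2\beta/n} = \frac{\beta R^2}{2n} \left( \frac{n S}{\beta R} \right)^{2\beta/n},$$
which is positive. Plugging back in verifies that $b^{(A)}(\cdot, t_0)$ has support of radius exactly $R$ with boundary slope exactly $S$, completing the proof.

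I do not foresee a real obstacle here: the statement is essentially a two-parameter scaling fact for the Barenblatt profile, and the cancellation $-\beta(m-1)-2\beta/n = -1$ is exactly what makes the slope depend on $(R,t_0)$ in a decoupled manner, so the system is triangular and invertible by inspection.
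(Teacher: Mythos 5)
Your proof is correct and follows exactly the paper's argument: solve the slope equation for $t_0 = \beta R/(nS)$, then plug into the support-radius equation to get $A$. The intermediate formulas (including the use of the identity $\beta(m-1)+2\beta/n=1$ to simplify the gradient) match the paper verbatim.
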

\begin{proof}
To ensure that the support of $b^{(A)}(x, t_0)$ has radius $R$, we need
\begin{equation} \label{A}
A = \frac{\beta}{2n} t_0^{-2\beta/n}R^2,
\end{equation}
On the other hand, as $|x| \rightarrow R^-$, we have
$$|\nabla b| \rightarrow \frac{\beta}{n}t_0^{-1}R.$$
Hence if we choose $t_0$ so that
$$S = \frac{\beta}{n} t_0^{-1} R,$$
namely
$$t_0 = \frac{\beta R}{Sn},$$
we can then define $A$ by
$$A = \frac{\beta}{2n} \left( \frac{\beta R}{Sn} \right)^{-2\beta/n} R^2,$$
so that $A$ satisfies
(\ref{A}).  This completes the proof.
\end{proof}

\subsection{Graveleau solutions} There is a one-parameter family of Graveleau solutions with pressure $g=g^{(c)}(x,t)$ for a parameter $c>0$ (see \cite{AA, AG}), defined as follows.  There are fixed constants $\alpha^* \in (1,2)$ and $\gamma<0$ depending only on $m$ and $n$ and a fixed solution $\varphi: [\gamma,0] \rightarrow [0, \infty)$ of a nonlinear degenerate ODE with the properties that 
\begin{enumerate}
\item[(i)] $\varphi(0)=0$ and $\varphi'(0)=-1$;
\item[(ii)] $\varphi(\gamma)=0$, $\varphi'(\gamma)>0$ is finite and  $\varphi>0$ on $(\gamma,0)$.
\end{enumerate}
We extend $\varphi$ to $(-\infty,0]$ by setting it equal to zero on $(-\infty, \gamma)$.  The Graveleau solutions are then given by 
$$g(x,t) = \frac{r^2 \varphi (c\eta)}{-t}, \quad \textrm{for }t<0.$$
where
$$r=|x|, \ \eta = tr^{-\alpha^*}$$
Note here that time $t$ and the variable $\eta$ are \emph{negative}, since the focusing time is $t=0$.  For any $\tau>0$ the translated solution $(x,t) \mapsto g(x,t-\tau)$ for $t \in [0,\tau)$ is locally a continuous solution of the PME in the sense of Section \ref{subsec}.   It is not a global solution, since it is not in $L^1(\mathbb{R}^n)$, but for our purposes we only need to know that the local comparison principle Theorem \ref{comparison} applies.

The free boundary of the solution is where $c\eta = \gamma$, namely
\begin{equation} \label{interface}
r = \left( \frac{ct}{\gamma} \right)^{1/\alpha^*}.
\end{equation}

We have the following elementary lemma, analogous to Lemma \ref{bl} above.

\begin{lemma} Given a slope $S>0$ and a radius $R>0$ we can find $c>0$ and $t_0<0$ so that $x \mapsto g^{(c)}(x, t_0)$ is supported on the complement of the ball of radius $R$, and $\lim_{|x| \rightarrow R^+} |\nabla g(r, t_0)| = S.$
 \end{lemma}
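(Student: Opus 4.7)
The plan is to mimic the proof of Lemma 4.1 for the Barenblatt family: the Graveleau profile $g^{(c)}$ depends on two free parameters $c>0$ and $t_0<0$, and the two prescribed quantities ($R$ and $S$) will give us two equations in those parameters that are easily solved explicitly.

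First, I would read off the free-boundary condition directly from \eqref{interface}: the support of $x\mapsto g^{(c)}(x,t_0)$ is $\{|x|\ge (ct_0/\gamma)^{1/\alpha^*}\}$, so insisting that this radius equal $R$ gives the relation
\[
c t_0 = \gamma R^{\alpha^*}.
\]
Since $\gamma<0$, both sides are negative when $c>0$ and $t_0<0$, so this is consistent with the sign constraints.

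Next, I would compute the boundary slope by differentiating $g = r^2\varphi(c\eta)/(-t)$ in $r$, using $\eta = t r^{-\alpha^*}$ and hence $\partial_r\eta=-\alpha^* t r^{-\alpha^*-1}$. A short chain-rule calculation gives
\[
\frac{\partial g}{\partial r} = \frac{2r\,\varphi(c\eta)}{-t} + \alpha^* c\, r^{1-\alpha^*}\,\varphi'(c\eta).
\]
As $r\to R^+$ at fixed $t=t_0$, $c\eta\to\gamma$, so by property (ii) of $\varphi$ the first term vanishes and $\varphi'(c\eta)\to\varphi'(\gamma)>0$. Thus
\[
\lim_{|x|\to R^+}|\nabla g(x,t_0)| = \alpha^* c\, R^{1-\alpha^*}\,\varphi'(\gamma).
\]

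Finally, I would set this limit equal to $S$ and solve, obtaining
\[
c = \frac{S\,R^{\alpha^*-1}}{\alpha^*\,\varphi'(\gamma)}>0,
\qquad
t_0 = \frac{\gamma R^{\alpha^*}}{c} = \frac{\alpha^*\,\gamma\,\varphi'(\gamma)\,R}{S}<0,
\]
where the sign of $t_0$ is forced by $\gamma<0$ and the positivity of the other factors. There is no real obstacle here: the lemma is a two-parameter matching, and the only thing that needs a little care is the sign bookkeeping coming from $t_0<0$ and $\gamma<0$, which is precisely what makes both equations solvable with the required signs.
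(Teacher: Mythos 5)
Your proposal is correct and follows essentially the same route as the paper: read off the free-boundary condition $ct_0=\gamma R^{\alpha^*}$ from \eqref{interface}, differentiate $g$ in $r$, note that the $\varphi(c\eta)$ term drops out at the interface, and match the resulting slope to $S$. The only (cosmetic) difference is that you express the boundary slope as $\alpha^* c R^{1-\alpha^*}\varphi'(\gamma)$ and solve for $c$ first, whereas the paper writes the equivalent expression $\alpha^*\gamma R\,\varphi'(\gamma)/t_0$ (using $c\eta=\gamma$ at the interface) and solves for $t_0$ first; both yield the same $(c,t_0)$.
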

 \begin{proof}
 From (\ref{interface}) we have
 \begin{equation} \label{Rg}
 R = \left( \frac{ct}{\gamma} \right)^{1/\alpha^*}.
 \end{equation}
We compute $|\nabla g(x, t_0)|$ on the interface $c\eta=\gamma$ as
\[
\begin{split}
\lim_{|x| \rightarrow R^+} |\nabla g(x, t_0)| = {} & \frac{\partial}{\partial r}\bigg|_{r=R^+} \frac{r^2 \varphi(c\eta)}{-t}  \\
= {} & r^2 \frac{\varphi'(c\eta)}{-t_0} c \eta_r \quad \textrm{since $\varphi(c\eta)=0$}\\
= {} & \frac{r^2}{-t_0} \varphi'(\gamma) (-\alpha^* \eta r^{-1}) \\
={} & \frac{\alpha^* c\eta R}{t_0} \varphi'(\gamma) \\
= {} & \frac{\alpha^* \gamma R}{t_0} \varphi'(\gamma). 
\end{split}
\] 
Note that both $\gamma$ and $t_0$ are negative.  Then choose $t_0$ by
$$ \frac{\alpha^* \gamma R}{t_0} \varphi'(\gamma) =S,$$
namely
$$t_0 = \frac{\alpha^* \gamma R}{S} \varphi'(\gamma),$$
and $c$ by (\ref{Rg}) with $t=t_0$.  This proves the lemma.
 \end{proof}
 
 \section{Initial velocity of the boundary} \label{velocity}
 
For solutions of the PME which are smooth up to the free boundary for all times, the boundary moves in the outer normal direction with speed equal to $|\nabla v|$.  The next result shows that, under rather mild conditions on the initial data corresponding to interior and exterior sphere conditions, the boundary approximately moves with this speed for a short time.

\begin{theorem} \label{thmballs}
Let $v_0 \ge 0$ be a function on $\mathbb{R}^n$ such that the set $\Omega =\{ v_0> 0 \}$ is a bounded domain with boundary $\partial \Omega$.  Assume that $v_0|_{\ov{\Omega}}$ is in $C^1(\ov{\Omega})$ and $0 \in \partial \Omega$.
 Let $v(x,t)$ be the solution of the PME (\ref{PME2}) with initial data $v_0$, and denote $\Omega_t = \{ x\in \mathbb{R}^n \ | \ v(x,t)>0 \}$.

Write $B_r(p_r)$ for the open ball of radius $r$ centered at $p_r:=(0,\ldots, 0, r)$ and $B_R(q_R)$ for the open ball of radius $R$ centered at the point $q_R:=(0,\ldots,0, -R)$ (see Figure \ref{figure}).
\begin{enumerate}
\item[(i)] Suppose that $B_{r_0}(p_{r_0}) \subset \Omega$ for some $r_0>0$ and for some $S>0$,
\begin{equation} \label{v01}
| \nabla v_0(0)|  >S.
\end{equation}  
Then there exists $\delta>0$ and $r \in (0,r_0]$ such that 
$$B_{r+St}(p_r) \subset \Omega_t, \qquad \textrm{for } t \in (0, \delta).$$
\item[(ii)] Suppose that $B_{R_0}(q_{R_0}) \subset \Omega^c$ for some $R_0>0$ and for some $S>0$,
\begin{equation} \label{v02}
| \nabla v_0(0)| < S.
\end{equation}
Then there exists $\delta>0$ and $R \in (0, R_0]$  such that
$$B_{R-St}(q_R) \subset (\Omega_t)^c, \qquad \textrm{for } t \in (0, \delta).$$
\end{enumerate}
\end{theorem}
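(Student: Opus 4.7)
The plan is to construct explicit barrier solutions---a translated Barenblatt $\tilde b$ for part (i) and a translated Graveleau $\tilde g$ for part (ii)---whose free-boundary speeds at $\tau=0$ strictly separate $S$ from $|\nabla v_0(0)|$, and to propagate this separation to a positive time interval via the local comparison principle (Theorem~\ref{comparison}). By concavity of the support radius as a function of $\tau$, a strict separation at $\tau=0$ then yields the desired inclusion for all $\tau\in(0,\delta)$.

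For (i), pick $S'\in(S,|\nabla v_0(0)|)$; by Lemma~\ref{bl} obtain a Barenblatt $b^{(A)}$ whose time-$t_0$ support is a ball of radius $r\in(0,r_0)$ with boundary slope $S'$, translate to center $p_r$, and put $\tilde b(x,\tau):=b^{(A)}(x-p_r,t_0+\tau)$. The essential step is the initial comparison $\tilde b(\cdot,0)\le v_0$ on $\bar B_r(p_r)$. In coordinates where $\nabla v_0(0)=|\nabla v_0(0)|e_n$ one has $\tilde b(x,0)=S'x_n-S'|x|^2/(2r)$ on $B_r(p_r)$. Using continuity of $\nabla v_0$ pick $S''\in(S',|\nabla v_0(0)|)$ with $|\nabla v_0|\ge S''$ in a neighborhood of $0$, integrate the gradient along the segment from $x$ to $0$ (which lies in $\bar B_{r_0}(p_{r_0})$ by convexity), and combine with the interior-ball estimate $d(x,\partial\Omega)\ge r_0-|x-p_{r_0}|\ge x_n-|x'|^2/(2r_0)+O(|x|^3)$ to obtain $v_0(x)\ge S''(x_n-|x'|^2/(2r_0))$ plus lower-order terms; subtracting,
\[
v_0(x)-\tilde b(x,0)\;\ge\;(S''-S')x_n+|x'|^2\bigl(\tfrac{S'}{2r}-\tfrac{S''}{2r_0}\bigr)+\tfrac{S'x_n^2}{2r}+\text{smaller},
\]
which is nonnegative on $\bar B_r(p_r)$ provided $r\le r_0 S'/S''$ and $r$ is small enough. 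Now Theorem~\ref{comparison} on a large ball $\tilde B$ enclosing the Barenblatt support for $\tau\in[0,\delta]$ (boundary comparison is free since $\tilde b\equiv 0$ on $\partial\tilde B$) yields $\tilde b\le v$, hence $B_{R(\tau)}(p_r)\subset\Omega_\tau$ with $R(\tau)=r(1+\tau/t_0)^{\beta/n}$ concave and $R'(0)=S'>S$; shrinking $\delta$ gives $R(\tau)\ge r+S\tau$ on $(0,\delta)$, and (i) follows.

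Part~(ii) is the exact dual. Pick $S'\in(|\nabla v_0(0)|,S)$; the Graveleau analog of Lemma~\ref{bl} furnishes $g^{(c)}$ with support complement $\bar B_R(q_R)$ and boundary slope $S'$ at time $t_0<0$; set $\tilde g(x,\tau):=g^{(c)}(x-q_R,t_0+\tau)$ for $\tau\in[0,|t_0|)$. The initial comparison $v_0\le\tilde g(\cdot,0)$ near origin follows by the same Taylor-matching, now with $S''\in(|\nabla v_0(0)|,S')$ (so $|\nabla v_0|\le S''$ near $0$), the exterior-ball estimate $d(x,\partial\Omega)\le x_n+|x'|^2/(2R_0)+O(|x|^3)$, and the Graveleau expansion $\tilde g(x,0)\approx S'(x_n+|x'|^2/(2R))$ near $\partial B_R(q_R)$; selecting $R\le R_0 S'/S''$ makes the analogous difference nonnegative. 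Apply Theorem~\ref{comparison} on a small ball $B_\epsilon(0)$ where the Taylor estimates hold (the boundary comparison on $\partial B_\epsilon(0)$ follows from strict initial inequality and continuity); this gives $v\le\tilde g$ on $B_\epsilon(0)\times[0,\delta]$, so $B_{\rho(\tau)}(q_R)\cap B_\epsilon(0)\subset\Omega_\tau^c$, where $\rho(\tau)=(c(t_0+\tau)/\gamma)^{1/\alpha^*}$ is concave with $\rho'(0)=-S'>-S$, hence $\rho(\tau)\ge R-S\tau$ for small $\tau$. For $x\in B_{R-S\tau}(q_R)\setminus B_\epsilon(0)$, the inclusion $B_R(q_R)\subset B_{R_0}(q_{R_0})\subset\Omega^c$ gives $v_0\equiv 0$ on a definite neighborhood of $x$, so finite speed of propagation for $v$ (using the Lipschitz bound on $v_0$) keeps $v(x,\tau)=0$ for $\tau$ small.

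The main obstacle is the initial comparison in each case: the pointwise first-order condition $|\nabla v_0(0)|\gtrless S$ must be upgraded to a pointwise inequality against a barrier whose profile is quadratic rather than linear, and in tangential directions to $\partial\Omega$ at $0$ the function $v_0$ is intrinsically softer than $|x|$. The indispensable geometric input is the interior (resp.\ exterior) ball condition, which pins $\partial\Omega$ beneath (resp.\ above) a paraboloid at $0$ and thus furnishes the quadratic control on the signed distance function precisely needed to absorb the tangential profile of the Barenblatt (resp.\ Graveleau) barrier.
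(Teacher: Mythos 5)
Your proposal follows the same route as the paper: compare from below with a translated Barenblatt solution in case~(i), from above with a translated Graveleau solution in case~(ii), match interface radius and slope at time zero via Lemma~\ref{bl} and its Graveleau analogue, and invoke the local comparison principle (Theorem~\ref{comparison}). The extra value you add is a quantitative justification of the initial pointwise comparison $v_0 \gtrless \textrm{barrier}$, which the paper dispatches with ``shrinking $r$ (resp.\ $R$) if necessary''; your Taylor-matching against the interior/exterior ball is exactly what makes that sentence precise, and the conclusion that $r \lesssim r_0 S'/S''$ suffices is correct. A few of the supporting details are stated imprecisely, though none is fatal. First, integrating $\nabla v_0$ along the segment from $x$ to $0$ only yields $v_0(x) = |\nabla v_0(0)|\,x_n + o(|x|)$, with an error governed by the modulus of continuity of $\nabla v_0$; for merely $C^1$ data this $o(|x|)$ term need not be $O(|x|^2)$ and so cannot be absorbed by the quadratic terms. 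The clean fix, which genuinely uses the interior ball, is to integrate $(v_0)_n$ along the vertical $e_n$-segment from the point where that ray meets $\partial\Omega$ (which the interior ball bounds above by $r_0 - \sqrt{r_0^2 - |x'|^2}$) up to $x$, using that $(v_0)_n \ge S''$ on a fixed neighborhood of $0$ by continuity of $\nabla v_0$; this gives $v_0(x)\ge S''\bigl(x_n - (r_0-\sqrt{r_0^2-|x'|^2})\bigr)$ with no error term. Second, you appeal to concavity of the support radius $R(\tau)$; concavity gives an \emph{upper} bound $R(\tau)\le r+S'\tau$, the wrong direction, and what you actually need (and what makes ``shrinking $\delta$'' work) is just continuity of $R'$ at $\tau=0$. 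Third, in part~(ii) the finite-speed-of-propagation patch for $B_{R-S\tau}(q_R)\setminus B_\epsilon(0)$ is unnecessary and in fact a little delicate (the exclusion distance $S\tau$ shrinks with $\tau$, so a crude speed bound does not obviously close it); it is avoided entirely by first fixing $\epsilon$ so the Taylor estimates hold and then taking $R<\epsilon/2$, since $B_{2R}(q_R)\subset B_\epsilon(0)$ and one can run the comparison on $B_{2R}(q_R)$ directly, which is what the paper does.
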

 \begin{proof}
 
 For (i), the assumptions imply that at the origin the gradient vector of $v_0$ is $(0, \ldots,0, (v_0)_n)$ with $(v_0)_n \ge S+\ve$ for some $\ve>0$.
We can find a constant $A>0$ and a time $t_0$ such that Barenblatt solution $b(x,t)=b^{(A)}(x,t)$ has the property that its support at time $t_0$ is the closure of the ball $B_r(p_r)$ with $0<r\le r_0$ and that $|\nabla b(x, t_0)| \rightarrow S+\ve/2$, as $x$ tends to a point on $\partial B_r(p_r)$.  Then 
shrinking $r$ if necessary, we may assume that $v_0(x) \ge b(x, t_0)$ on $B_{2r}(p_r)$.  Moreover, $b(x,t)=0$ on $\partial B_{2r}(p_r)$ for $t \in [t_0, t_0+\delta)$ for a small $\delta>0$.  It follows from the local comparison principle Theorem \ref{comparison} that $v(x,t) \ge b(x,t_0+t)$ on $B_{2r}(p_r)$ for $t\in[0, \delta)$.
But the support of $b(x,t_0+t)$ for $t \in (0,\delta)$, which is a ball centered at $p_r$, moves outward with speed greater than $S$, for at least a short time, and hence shrinking $\delta>0$ if necessary,
$$v(x,t)\ge b(x,t_0+t) >0 \ \textrm{for } x\in B_{r+St}(p_r), \ t \in (0,\delta),$$
 as required.
 
 In case (ii), the gradient vector of $v_0$ at the origin is $(0, \ldots, 0, (v_0)_n)$ with $(v_0)_n \le S -\ve$ for some $\ve>0$.  Then
 choose $c>0$ and a time $t_0>0$ so that the Graveleau solution $g(x,t) = g^{(c)}(x,t)$ has the following property:  $x \mapsto g(x,t_0)$ has support $(B_R(q_R))^c$ with $0<R\le R_0$ and  $| \nabla g(x, t_0)| \rightarrow S-\frac{\ve}{2}$ as $x$ in the support of $g$ tends to a point on $\partial B_R(q_R)$.
 It follows that, shrinking $R$ if necessary, we may assume that $v_0(x) \le g(x,t_0)$ on $B_{2R}(q_R)$.  Moreover, we may assume that $v_0(x) < g(x, t_0)$ for $x \in \partial B_{2R}(q_R)$.
 By continuity of $v(x,t)$ and $g(x,t)$, it follows that there exists $\delta>0$ such that $v(x,t) \le g(x, t_0+t)$ on $\partial B_{2R}(q_R)$ for $t\in [0, \delta)$.  Applying again the local comparison principle we obtain $v(x,t) \le g(x,t)$ on $B_{2R}(q_R)$ for $t \in [0, \delta)$.  The support of $g(x,t_0+t)$ is the complement of a ball centered at $q_R$ whose radius is shrinking at speed less than $S$ for $t\in [0,\delta)$ (again shrinking $\delta>0$ if necessary) we have,
 $$v(x,t) \le g(x, t_0+t)=0, \qquad \textrm{for } x \in B_{R-St}(q_R), \quad t \in (0,\delta),$$
 and this completes the proof.
  \end{proof}
 
  \begin{figure}
  \hspace{28pt}
 \begin{tikzpicture}
    \begin{axis}[thick,
        xmin=-1.2,xmax=1.2,
        ymin=-2,ymax=2,
       axis x line=middle,
       axis y line=middle,
        axis line style=->, xlabel style = {xshift=1.5cm, yshift=-.1cm},
        xlabel={$x_1, \ldots, x_{n-1}$}, 
        x tick label style={major tick length=0pt},
        ylabel={$x_n$},
        yticklabels={,,},xticklabels={,,},
        ]
         
        \addplot[no marks,-, ultra thick] expression[domain=-1.2:1.2,samples=100]{0.3*x^2+.05*x^4} 
                    node[pos=1,xshift=-0.8cm,yshift=0.5cm]{$\{ v_0>0 \}$} 
                    node[pos=1, xshift=-4.8cm, yshift=-.3cm]{$\partial \Omega$}; 
    \end{axis}
    \draw (3.425,3.56) circle (.7cm) node[xshift=.3cm]{$p_r$} node[xshift=-.7cm, yshift=0.9cm]{$B_r(p_r)$};
    \draw[ultra thick] (3.425, 3.56) circle (0.02cm);
    \draw (3.425,1.725) circle (1.1cm) node[xshift=.34cm]{$q_R$} node[xshift=-1.2cm, yshift=-1.15cm]{$B_R(q_R)$};
    \draw[ultra thick] (3.425,1.725) circle (0.02cm);
\end{tikzpicture}
\caption{\ }
\label{figure}
 \end{figure}

 An immediate consequence is that if we have both interior and exterior ball conditions then the boundary initially moves with velocity equal to the gradient of $v_0$ in the sense of the following:
 
 \begin{corollary}
 Let $v_0 \ge 0$ be a  function on $\mathbb{R}^n$ such that the set $\Omega =\{ v_0> 0 \}$ is a bounded domain with boundary $\partial \Omega$.  Assume that $v_0|_{\ov{\Omega}}$ is in $C^1(\ov{\Omega})$ and $0 \in \partial \Omega$.
Let $v(x,t)$ be the solution of the PME (\ref{PME2}) with initial data $v_0$, and denote $\Omega_t = \{ x\in \mathbb{R}^n \ | \ v(x,t)>0 \}$.  Suppose that there exists $r_0, R_0>0$ such that $B_{r_0}(p_{r_0}) \subset \Omega$ and $B_{R_0}(q_{R_0}) \subset \Omega^c$, using the notation of the previous theorem.

Let $(0, \ldots, 0, y(t))$ be a point on $\partial \Omega$, where $y(t)$ for $t\ge 0$ is defined by
$$y(t) = \max \{ x_n \le 0 \ | \ v(0, \ldots, 0,x_n,t)=0 \}.$$
Then the one-sided derivative $y'(0)$ exists and is equal to $-|\nabla v_0|(0)$.
 \end{corollary}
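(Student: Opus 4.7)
The plan is to sandwich $y(t)/t$ between two linear bounds by applying both halves of Theorem \ref{thmballs} and then letting the tolerance tend to zero. Write $S_0 := |\nabla v_0|(0)$. The interior ball condition forces $\partial \Omega$ to have outward normal $-e_n$ at the origin, so $\nabla v_0(0)$ points in the $+e_n$ direction and equals $(0, \ldots, 0, S_0)$. In particular, the gradient hypothesis in Theorem \ref{thmballs} is satisfied for any $S$ close to $S_0$.

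Fix $\ve \in (0, S_0)$. Applying part (i) with $S = S_0 - \ve$ produces $\delta > 0$ and $r \in (0, r_0]$ with $B_{r + (S_0 - \ve) t}(p_r) \subset \Omega_t$ for $t \in (0, \delta)$. The bottommost point of this ball on the $x_n$-axis is $(0, \ldots, 0, -(S_0 - \ve) t)$, where therefore $v > 0$; since $y(t)$ is the largest $x_n \le 0$ on which $v$ vanishes, this yields $y(t) \le -(S_0 - \ve) t$. Symmetrically, part (ii) with $S = S_0 + \ve$ gives $R \in (0, R_0]$ with $B_{R - (S_0 + \ve) t}(q_R) \subset (\Omega_t)^c$ for small $t$, whose topmost point on the $x_n$-axis, $(0, \ldots, 0, -(S_0 + \ve) t)$, lies in $(\Omega_t)^c$, so $y(t) \ge -(S_0 + \ve) t$.

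Since $v_0(0) = 0$ and $0$ lies in the set $\{x_n \le 0 : v_0(0, \ldots, 0, x_n) = 0\}$, we have $y(0) = 0$. Combining the two inequalities, for $t$ in a common interval $(0, \delta)$,
$$-(S_0 + \ve) \;\le\; \frac{y(t) - y(0)}{t} \;\le\; -(S_0 - \ve),$$
and letting first $t \to 0^+$ and then $\ve \to 0$ yields $y'(0^+) = -S_0 = -|\nabla v_0|(0)$, as desired. I do not anticipate a serious obstacle: the only technical points to check are that $y(t)$ is well-defined and finite for small $t$ (which follows because $v_0$ has compact support, hence so does $v(\cdot, t)$ by the standard PME $L^1$ theory from Section \ref{subsec}, making the zero set on the negative $x_n$-axis nonempty and closed) and that the interior/exterior ball conditions pin down the direction of $\nabla v_0(0)$, both of which are straightforward.
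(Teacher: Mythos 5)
Your proposal is correct and follows essentially the same $\ve$-sandwich argument as the paper: apply both halves of Theorem \ref{thmballs} with slopes $S_0\mp\ve$, trap $y(t)/t$ between $-(S_0+\ve)$ and $-(S_0-\ve)$, and let $\ve\to 0$. One small technical imprecision worth noting: the point $(0,\ldots,0,-(S_0-\ve)t)$ lies on $\partial B_{r+(S_0-\ve)t}(p_r)$, not in the open ball, so Theorem \ref{thmballs}(i) does not directly give $v>0$ there; the cleaner route is that the ball inclusion gives $v>0$ on the open segment $\{(0,\ldots,0,x_n): -(S_0-\ve)t< x_n\le 0\}$, and it is this that forces the maximum defining $y(t)$ to be at most $-(S_0-\ve)t$.
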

 \begin{proof}
Let $\ve>0$.  Then (\ref{v01}) holds for $S= |\nabla v_0|-\ve$ and (\ref{v02}) holds for $S=|\nabla v_0| +\ve$, where we are evaluating $|\nabla v_0|$ at the origin.
Applying Theorem \ref{thmballs},   there exists $\delta, r, R>0$ such that 
$$B_{r+(|\nabla v_0|-\ve)t}(p_r) \subset \Omega_t, \quad B_{R-(|\nabla v_0|+\ve)t}(q_R) \subset (\Omega_t)^c, \quad \textrm{for } t\in (0,\delta).$$
Then for $t\in (0,\delta),$
$$-(|\nabla v_0|+\ve)t \le y(t) \le - (|\nabla v_0|-\ve)t,$$
namely, since $y(0)=0$,
$$\left| \frac{y(t)-y(0)}{t} + | \nabla v_0| \right| < \ve.$$
Hence the one-sided derivative $y'(0)$ exists and is equal to $-|\nabla v_0|$.
 \end{proof}

 \section{Concavity breaking on the boundary}\label{proofmainthm}
 
 In this section we prove Theorem \ref{maintheorem}.
Write $v_0$ for the function constructed by Theorem \ref{thm1}.  Since $\displaystyle{x\mapsto \frac{\partial v_0}{\partial y}(x,0)}$ is positive and strongly convex as a function of $x \in [-1,1]$ we have
$$\frac{\partial v_0}{\partial y}(0,0) < \frac{ \frac{\partial v_0}{\partial y} (-1/2, 0) + \frac{\partial v_0}{\partial y}(1/2,0)}{2}.$$
Hence we can find positive constants $S_-, S_0, S_+$ such that
\begin{equation}\label{Ss}
S_0< \frac{S_-+S_+}{2},
\end{equation}
and 
$$
\frac{\partial v_0}{\partial y} (-1/2,0) >  S_-,\quad 
\frac{\partial v_0}{\partial y} (0,0) <  S_0 , \quad
\frac{\partial v_0}{\partial y} (1/2, 0)  >  S_+.
$$
Let $v(x,t)$ be the solution of the PME (\ref{PME2}) starting at $v_0$.
We apply Theorem \ref{thmballs} at each of the three points $(-1/2, 0)$, $(0,0)$, $(1/2,0)$ to find a small $\delta>0$ such that for $t \in (0,\delta)$,
$$(-1/2, -S_- t) \in \ov{\Omega_t}, \ (0, -S_0t) \notin \Omega_t, \ (1/2, - S_+t) \in \ov{\Omega_t},$$
But if $\Omega_t$ were convex for $t \in [0, t_0]$, for some $t_0>0$, then we must have
$$S_0 \ge \frac{S_- + S_+ }{2},$$
contradicting (\ref{Ss}).  This completes the proof of Theorem \ref{maintheorem}.

\end{document}